\def \R {{\mathbb{R}}}
\def \Q {{\mathbb{Q}}}
\def \N {{\mathbb{N}}}
\def \Z {{\mathbb{Z}}}
\def \C {{\mathbb{C}}}
\newcommand{\sgn}{\operatorname{sgn}}
\newtheorem*{theorem*}{Theorem}
\newtheorem{theorem}{Theorem}
\newtheorem{cor}[theorem]{Corollary}
\newtheorem{lemma}[theorem]{Lemma}
\newtheorem{rem}[theorem]{Remark}
\title{An irreducibility criterion for polynomials over integers}
\author {Biswajit Koley,  A.Satyanarayana Reddy \\
Department of 
Mathematics, Shiv Nadar 
University, India-201314\\ (e-mail: 
bk140@snu.edu.in, satyanarayana.reddy@snu.edu.in).
  }
\date{}
\begin{document}
\maketitle
\begin{abstract}
In this article, we consider the polynomials of the form $f(x)=a_0+a_1x+a_2x^2+\cdots+a_nx^n\in \Z[x],$
where $|a_0|=|a_1|+\dots+|a_n|$ and $|a_0|$ is a prime.
We show that these polynomials have a cyclotomic factor whenever reducible. As a consequence, we give a  simple procedure for checking the irreducibility of trinomials of this form and separability criterion for certain quadrinomials.
\end{abstract}
{\bf{Key Words}}: Irreducible polynomials, cyclotomic polynomials.\\
{\bf{AMS(2010)}}: 11R09, 12D05, 12D10.\\

\section{Introduction}\label{section:intro}
 Polynomials of the form $ax^n\pm b$ have cyclotomic factors if $a=b.$ W. Ljunggren~\cite{lju} extended the work of Selmer~\cite{sel} and showed that a polynomial of form  $x^n\pm x^m \pm 1$ or  $x^n\pm x^m\pm x^r\pm 1$ is either irreducible or divisible by a cyclotomic polynomial.  The following result 
 provides a criterion for the irreducibility of polynomials with integer  coefficients.

\begin{theorem}[L. Panitopol, D. Stef\"anescu~\cite{LP}]\label{thm:LP}
Let $f(x)=a_0+a_1x+a_2x^2+\cdots+a_nx^n\in \Z[x]$ be a polynomial such that $|a_0|>|a_1|+\cdots+|a_n|$. If $|a_0|$ is a prime or $\sqrt{|a_0|}-\sqrt{|a_n|}<1$ then $f(x)$ is irreducible in $\Z[x]$. 
\end{theorem}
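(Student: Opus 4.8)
The plan is to combine a classical root-localization estimate with a multiplicativity argument applied simultaneously to the constant terms and the leading coefficients of any hypothetical factorization. The inequality $|a_0|>|a_1|+\cdots+|a_n|$ is exactly what pushes all complex roots of $f$ strictly outside the closed unit disk, and this, together with the integrality of the coefficients of the factors, will do everything.

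First I would record the root bound. If $f(z)=0$ with $|z|\le 1$, then, since $a_0=-\sum_{i=1}^{n}a_iz^{i}$,
\[
|a_0|=\Bigl|\sum_{i=1}^{n}a_iz^{i}\Bigr|\le\sum_{i=1}^{n}|a_i|\,|z|^{i}\le\sum_{i=1}^{n}|a_i|<|a_0|,
\]
which is absurd. Hence every complex root $z$ of $f$ satisfies $|z|>1$; in particular $a_0\ne 0$, so $0$ is not a root of $f$.

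Next, suppose toward a contradiction that $f=gh$ with $g,h\in\Z[x]$ of positive degrees, say $\deg g=k\ge 1$. Let $b,c\in\Z\setminus\{0\}$ be the leading coefficients of $g,h$, so $bc=a_n$, and factor over $\C$: $g(x)=b\prod_{i}(x-\beta_i)$, $h(x)=c\prod_{j}(x-\gamma_j)$, where the $\beta_i,\gamma_j$ are roots of $f$ and hence have modulus $>1$. Then $g(0)$ and $h(0)$ are nonzero integers with $g(0)h(0)=f(0)=a_0$, and $|g(0)|=|b|\prod_i|\beta_i|>|b|\ge 1$ since $k\ge 1$ and each $|\beta_i|>1$. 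Because $|g(0)|$ is an integer strictly larger than the integer $|b|$, we get $|g(0)|\ge|b|+1\ge 2$, and likewise $|h(0)|\ge|c|+1\ge 2$.

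Finally I would split into the two hypotheses. If $|a_0|$ is prime, then $|a_0|=|g(0)|\,|h(0)|$ writes the prime $|a_0|$ as a product of two integers each at least $2$, a contradiction. If instead $\sqrt{|a_0|}-\sqrt{|a_n|}<1$, then
\[
|a_0|=|g(0)|\,|h(0)|\ge(|b|+1)(|c|+1)=|b|\,|c|+|b|+|c|+1\ge|a_n|+2\sqrt{|b|\,|c|}+1=\bigl(\sqrt{|a_n|}+1\bigr)^{2},
\]
using AM--GM and $|b|\,|c|=|a_n|$; hence $\sqrt{|a_0|}\ge\sqrt{|a_n|}+1$, contradicting the hypothesis. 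In either case $f$ has no factorization into two integer polynomials of positive degree, so $f$ is irreducible, as claimed. The argument is short; the only points that need care are the passage from the strict bound $|g(0)|>|b|$ to the integer bound $|g(0)|\ge|b|+1$, and the completing-the-square estimate $(|b|+1)(|c|+1)\ge(\sqrt{|a_n|}+1)^2$ — it is precisely this last step that makes $\sqrt{|a_0|}-\sqrt{|a_n|}<1$ the sharp hypothesis, and I regard it as the crux of the proof.
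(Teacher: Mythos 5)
Your proof is correct. Note that the paper itself gives no proof of this statement --- it is quoted as a known result of Panitopol and Stef\u anescu --- so there is no internal argument to compare against; your write-up is essentially the standard proof of that result. Both halves check out: the strict inequality $|a_0|>\sum_{i\ge 1}|a_i|$ does force every root outside the closed unit disk, the integrality jump from $|g(0)|>|b|$ to $|g(0)|\ge |b|+1$ is legitimate because $g(0)$ and $b$ are integers, and the AM--GM step $(|b|+1)(|c|+1)\ge(\sqrt{|a_n|}+1)^2$ correctly turns the hypothesis $\sqrt{|a_0|}-\sqrt{|a_n|}<1$ into a contradiction. It is worth observing that your root-localization step is exactly the device the authors reuse in their proof of Theorem~\ref{thm:main}: there the weaker hypothesis $|a_0|=\sum_{i\ge1}|a_i|$ only excludes roots with $|z|<1$ (the boundary $|z|=1$ survives), which is precisely why cyclotomic factors can appear in their setting but not in this one. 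The one small point you could make explicit is that $\deg g\ge 1$ and $\deg h\ge 1$ are both needed (i.e., irreducibility here means no factorization into two factors of positive degree), since the argument uses at least one root $\beta_i$ of $g$ and one root $\gamma_j$ of $h$; with that reading, the proof is complete.
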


In this paper 
we show that, if a polynomial  of the from $f(x)=a_0+a_1x+a_2x^2+\cdots+a_nx^n\in \Z[x],$
where $|a_0|=|a_1|+\dots+|a_n|$ and $|a_0|$ is a prime is reducible, then it is divisible by a cyclotomic polynomial and  has exactly one non-reciprocal factor. The condition  that $|a_0|$ is a prime number is necessary. If $|a_0|$ is not a  prime number, then $f(x)$ can have any number of   non-reciprocal factors. For example,  
\begin{equation*}
x^4+3x^2+4=(x^2-x+2)(x^2+x+2).
\end{equation*}
 More generally, every member of 
\begin{equation*}
x^{4n}-(a^2-1)x^{2n}-a^2=(x^n+a)(x^n-a)(x^n+1),
\end{equation*}
where $a\ge 2$ is an integer and $n\ge 1$, has at least two non-reciprocal factors. By use of Capelli's theorem \cite{capelli} it can be shown that arbitrary number of non-reciprocal factors can appear for suitable choice of $a$.  

But if the absolute value of the product of all the roots of a non-trivial factor of $f(x)$ belongs to $(0,1]$, then one can show that $f(x)$ has a cyclotomic factor even if $|a_0|$ is not a prime number.
\begin{theorem}\label{gen}
Let $f(x)=a_0+a_1x+a_2x^2+\cdots+a_nx^n\in \Z[x],$ where  $|a_0|=|a_1|+\dots+|a_n|$. If $g(x)=b_0+b_1x+\dots+b_mx^m$ is a factor of $f(x)$ with $0<|b_0|\le |b_m|,$ then $g(x)$ is a product of cyclotomic polynomials.
\end{theorem}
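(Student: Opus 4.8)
The plan is to pin down the location of the complex roots of $g$ and then recognise $g$ from them. First I would record where the roots of $f$ can sit. We may assume $a_0\neq 0$ (if $a_0=0$ then $|a_1|+\cdots+|a_n|=0$, so $f\equiv 0$ and there is nothing to prove), hence $\deg f\ge 1$. If $\alpha\in\C$ is a root of $f$ with $|\alpha|\le 1$, then from $a_0=-(a_1\alpha+\cdots+a_n\alpha^n)$ and $|\alpha|^i\le|\alpha|$ for $i\ge 1$ we get
\[
|a_0|\ \le\ \sum_{i\ge 1}|a_i|\,|\alpha|^i\ \le\ |\alpha|\sum_{i\ge 1}|a_i|\ =\ |\alpha|\,|a_0|,
\]
forcing $|\alpha|\ge 1$; hence \emph{every} root of $f$ has modulus at least $1$. (This is the boundary analogue of the root-location estimate behind the prime case of Theorem~\ref{thm:LP}.)

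Next I would use the hypothesis $0<|b_0|\le|b_m|$. Write $f=gh$ with $h\in\Z[x]$ and let $\beta_1,\dots,\beta_m$ be the roots of $g$, counted with multiplicity; each $\beta_j$ is a root of $f$, so $|\beta_j|\ge 1$, while $|\beta_1\cdots\beta_m|=|b_0|/|b_m|\le 1$. A product of numbers each of modulus $\ge 1$ that has modulus $\le 1$ must have every factor of modulus exactly $1$; so all roots of $g$ lie on the unit circle (and $|b_0|=|b_m|$).

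The core of the argument is to improve ``$|\beta|=1$'' to ``$\beta$ is a root of unity'' for each root $\beta$ of $g$. Since $\beta$ is a root of $f$ with $|\beta|=1$, the triangle inequality
\[
|a_0|\ =\ \Bigl|\sum_{i\ge 1}a_i\beta^i\Bigr|\ \le\ \sum_{i\ge 1}|a_i|\,|\beta|^i\ =\ \sum_{i\ge 1}|a_i|\ =\ |a_0|
\]
is an equality, so the nonzero terms $a_i\beta^i$ (with $i\ge 1$) are all positive real multiples of one another. If two indices $i>j\ge 1$ lie in the support of $(a_1,\dots,a_n)$, then $(a_i/a_j)\beta^{\,i-j}$ is a positive \emph{rational}, so $\beta^{\,i-j}$ is a rational number of modulus $1$, hence $\pm1$, and $\beta^{\,2(i-j)}=1$; if instead only one coefficient $a_k$ ($k\ge 1$) is nonzero, then $\beta^{k}=-a_0/a_k$ is a rational of modulus $1$, hence $\pm1$. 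Either way $\beta$ is a root of unity. I expect this step to be the main obstacle: it relies on the rigidity of the equality case of the triangle inequality together with the rationality of the ratios $a_i/a_j$, and it cannot be obtained from Kronecker's theorem alone, since $g$ need not be monic and its roots are not known a priori to be algebraic integers.

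Finally I would reconstruct $g$. Each root of $g$ being a root of unity, the monic polynomial $g(x)/b_m\in\Q[x]$ has only roots of unity among its roots; grouping them by order and using that $\Gal(\overline{\Q}/\Q)$ permutes the roots of a polynomial over $\Q$ without altering multiplicities, all primitive $n$th roots of unity that occur appear with one common multiplicity $e_n$, so $g(x)=b_m\prod_n\Phi_n(x)^{e_n}$. The product $\prod_n\Phi_n^{e_n}$ is monic and lies in $\Z[x]$, so $|b_m|$ is the content of $g$; in particular, when $g$ is primitive — automatically so whenever $f$ is primitive, since $\operatorname{cont}(f)=\operatorname{cont}(g)\operatorname{cont}(h)$ — we get $b_m=\pm 1$, and $g$ is, up to sign, a product of cyclotomic polynomials.
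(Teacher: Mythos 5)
Your proof is correct. The opening moves coincide with the paper's: every root of $f$ has modulus at least $1$, and since the moduli of the roots of $g$ multiply to $|b_0|/|b_m|\le 1$, they all equal $1$. Where you diverge is the passage from modulus one to root of unity. The paper obtains Theorem~\ref{gen} by ``suitable modifications'' of the proof of Theorem~\ref{thm:main}, whose key step is Kronecker's theorem applied to each irreducible factor; there, primality of $|a_0|$ forces the factor to have constant term $\pm1$ and hence (via the product of the root moduli) leading coefficient $\pm1$, so Kronecker applies. Under the weaker hypothesis of Theorem~\ref{gen} that mechanism is unavailable: for an irreducible factor $f_1$ with leading coefficient $d$ one only gets $|f_1(0)|=|d|$, and a non-monic irreducible integer polynomial can have all its roots on the unit circle without being cyclotomic (e.g.\ $5x^2-6x+5$, with roots $(3\pm 4i)/5$). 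Your replacement step --- the equality case of the triangle inequality combined with the rationality of the ratios $a_i/a_j$, yielding $\beta^{\,2(i-j)}=1$ --- closes exactly this gap, and is in fact the same rigidity device the paper deploys later in the proof of Theorem~\ref{thm:main} to identify $f_c$ as $\gcd(x^{n_i}+\sgn(a_0a_{n_i}))$; you simply use it earlier, to certify that the unit-modulus roots are roots of unity without any appeal to Kronecker. Your closing observation that the conclusion must be read up to the content and sign of $g$ (witness $f(x)=2x^2-2$ with factor $g(x)=2x-2$) is a fair reading of a small imprecision in the statement. In short: same localization of roots, but a more self-contained and, for this theorem, more watertight finish.
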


If $|a_0|$ is a prime number, then not only such a polynomial $g(x)$  defined in  Theorem~\ref{gen} exists, but also $f(x)$ has exactly one non-reciprocal factor. Further all the roots of $f(x)$ are simple zeros. Before proceeding, by removing zero coefficients if any, we can assume that $f(x)=a_{n_r}x^{n_r}+\cdots+a_{n_1}x^{n_1}+a_0$ where $a_0a_{n_1}\cdots a_{n_r}\ne 0.$ 

\begin{theorem}\label{thm:main}
Let $f(x)=a_{n_r}x^{n_r}+\cdots+a_{n_1}x^{n_1}+a_0,$ where $|a_{n_1}|+\cdots+|a_{n_r}|=|a_0|$ and $|a_0|$ be a prime number. If $f$ is reducible, then 
$f(x)=f_c(x)f_n(x)$, where $f_n(x)$ is the irreducible non-reciprocal factor of $f(x)$ and $f_c(x)$ is the product of all cyclotomic factors of $f(x),$ in particular $f_c(x)=\gcd(x^{n_i}+\sgn(a_0a_{n_i})),$ where $1\le i\le r$ and $\sgn(x)$ denotes the sign of $x\in \R$.
\end{theorem}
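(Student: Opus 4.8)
The plan is to locate the roots of $f$ precisely enough that the primality of $|a_0|$ pins down the entire factorization. We may assume $f$ is primitive: its coefficient $\gcd$ divides the prime $|a_0|$, and if it equals $|a_0|$ then $|a_{n_i}|=|a_0|$ for every $i$, which together with $\sum_i|a_{n_i}|=|a_0|$ forces $r=1$ and $f=\pm|a_0|(x^{n_1}\pm1)$, a constant times a product of cyclotomic polynomials, for which the statement is clear. The central preliminary fact (the \emph{alignment lemma}) is: if $\alpha$ is a root of $f$ with $|\alpha|\le1$, then $|\alpha|=1$ and $\alpha^{n_i}=-\sgn(a_0a_{n_i})$ for every $i$. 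To prove it I would write $|a_0|=\bigl|\sum_i a_{n_i}\alpha^{n_i}\bigr|\le\sum_i|a_{n_i}|\,|\alpha|^{n_i}\le\sum_i|a_{n_i}|=|a_0|$, so both inequalities are equalities; the second forces $|\alpha|^{n_i}=1$ and hence $|\alpha|=1$, and the first, being equality in the triangle inequality, forces each $a_{n_i}\alpha^{n_i}$ to equal $|a_{n_i}|$ times one fixed unit-modulus complex number, which---since these terms sum to $-a_0$---must be $-\sgn(a_0)$; thus $a_{n_i}\alpha^{n_i}=-|a_{n_i}|\sgn(a_0)$, i.e. $\alpha^{n_i}=-\sgn(a_0a_{n_i})$. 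Two consequences: $f$ has no root in the open unit disc, and (noting the converse computation $f(\alpha)=a_0+\sum_i a_{n_i}\alpha^{n_i}=a_0-\sgn(a_0)\sum_i|a_{n_i}|=0$ whenever $\alpha^{n_i}=-\sgn(a_0a_{n_i})$ for all $i$) the roots of unity among the roots of $f$ are exactly the common roots of the polynomials $x^{n_i}+\sgn(a_0a_{n_i})$, $1\le i\le r$.

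Next I would factor $f=p_1\cdots p_k$ into primitive irreducibles in $\Z[x]$, with $k\ge2$ since $f$ is reducible. No $p_j(0)$ vanishes, and $\prod_j|p_j(0)|=|a_0|$ is prime, so exactly one factor, say $p_1$, has $|p_1(0)|=|a_0|$ while $|p_j(0)|=1$ for $j\ge2$. For $j\ge2$, every root $\beta$ of $p_j$ is a root of $f$, hence $|\beta|\ge1$ by the alignment lemma, so from $|p_j(0)|=|\operatorname{lead}(p_j)|\prod_\beta|\beta|$ (product over the roots $\beta$ of $p_j$) we get $|\operatorname{lead}(p_j)|\le|p_j(0)|=1$, i.e. $|\operatorname{lead}(p_j)|=1=|p_j(0)|$. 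Theorem~\ref{gen} then applies to $p_j$ as a factor of $f$ and shows $p_j$ is a product of cyclotomic polynomials; being irreducible, $p_j$ is itself, up to sign, cyclotomic. Since $|p_1(0)|=|a_0|>1$, $p_1$ is not cyclotomic, so $p_2,\dots,p_k$ are precisely the cyclotomic factors of $f$, and $p_1$ occurs to the first power (else $|a_0|^2\mid|a_0|$). Adjusting signs, set $f_c:=p_2\cdots p_k$ (monic) and $f_n:=f/f_c=\pm p_1$, so $f=f_cf_n$ with $f_n$ irreducible and $f_c$ the product of all cyclotomic factors of $f$.

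Then $f_n$ is non-reciprocal: if it were reciprocal, its roots would be permuted by $\alpha\mapsto1/\alpha$, which together with the alignment lemma ($|\alpha|\ge1$ for all roots of $f$) would put every root of $f_n$ on the unit circle, whence $|f_n(0)|=|\operatorname{lead}(f_n)|$; but $\operatorname{lead}(p_j)=\pm1$ for $j\ge2$ gives $|\operatorname{lead}(f_n)|=|a_{n_r}|$, so $|a_0|=|a_{n_r}|$, and since $|a_0|=\sum_i|a_{n_i}|$ this forces $r=1$, $|a_{n_1}|=|a_0|$, contradicting primitivity. As $p_2,\dots,p_k$ are cyclotomic and hence reciprocal, $f_n$ is the unique non-reciprocal irreducible factor of $f$. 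Finally, the roots of $f_c$ are exactly the roots of unity among the roots of $f$ (the minimal polynomial of such a root is cyclotomic, divides $f$, and so is one of $p_2,\dots,p_k$), and by the second consequence of the alignment lemma these coincide with the common roots of $\{x^{n_i}+\sgn(a_0a_{n_i})\}_i$, hence with the roots of $g:=\gcd_i\bigl(x^{n_i}+\sgn(a_0a_{n_i})\bigr)$. Both $f_c$ and $g$ are monic, so to conclude $f_c=g$ it remains to check both are squarefree: $g$ is, since it divides the squarefree polynomial $x^{n_1}+\sgn(a_0a_{n_1})$, and $f_c$ is, since a repeated root $\zeta$ of $f$ would satisfy $f(\zeta)=f'(\zeta)=0$, yet from $\zeta^{n_i}=-\sgn(a_0a_{n_i})$ we compute $\zeta f'(\zeta)=\sum_i n_ia_{n_i}\zeta^{n_i}=-\sgn(a_0)\sum_i n_i|a_{n_i}|\ne0$. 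The one real obstacle is the alignment lemma---in particular extracting the exact values $\alpha^{n_i}=-\sgn(a_0a_{n_i})$ from equality in the triangle inequality---since the location of the roots, the identification of the cyclotomic factors, the non-reciprocity of $f_n$, and the $\gcd$ formula all flow from it.
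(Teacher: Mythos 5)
Your proof is correct and follows essentially the same route as the paper: roots of $f$ are excluded from the open unit disc by the triangle inequality, the primality of $|a_0|$ isolates a single irreducible factor with constant term $\pm|a_0|$ while the rest have unit constant and leading coefficients and are cyclotomic by Kronecker, equality in the triangle inequality yields $\zeta^{n_i}=-\sgn(a_0a_{n_i})$, and the derivative computation gives simplicity of the cyclotomic part. You are in fact somewhat more complete than the paper, which never actually argues that $f_n$ is non-reciprocal or that it is the unique such factor; your unit-circle argument for that step is a worthwhile addition.
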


Above theorem generates several other irreducibility criteria for different family of polynomials. Before stating them, we denote the {\em largest even part} of $n$ as $e(n)$, {\it i.e.,} if $n=2^{\alpha}n_1$ with $n_1$ odd, then $e(n)=2^{\alpha}$.

\begin{cor}\label{cor:pos}
Suppose $f(x)=a_{n_r}x^{n_r}+\cdots+a_{n_1}x^{n_1}+a_0\in \Z_+[x]$ be a polynomial with $a_{n_1}+\dots+a_{n_r}=a_0$ and $a_0$ is  a prime number.  Then $f(x)$ is irreducible if and only if there exist distinct $i,j$ such that $e(n_i)\ne e(n_j).$  
\end{cor}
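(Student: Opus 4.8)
The plan is to push the positivity hypothesis through Theorem~\ref{thm:main} and then reduce to an elementary computation of greatest common divisors of polynomials of the shape $x^{a}+1$. Since all of $a_0,a_{n_1},\dots,a_{n_r}$ are positive, the hypothesis $a_{n_1}+\dots+a_{n_r}=a_0$ is literally $|a_0|=|a_{n_1}|+\dots+|a_{n_r}|$, so Theorem~\ref{thm:main} applies, and $\sgn(a_0a_{n_i})=1$ for every $i$. Thus, if $f$ happens to be reducible, then $f=f_c f_n$ with $f_n$ the irreducible non-reciprocal factor and
\[
f_c=\gcd\bigl(x^{n_1}+1,\ \dots,\ x^{n_r}+1\bigr),
\]
the product of all cyclotomic factors of $f$. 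So everything hinges on deciding when this gcd is trivial.

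I would isolate the following lemma: for positive integers $a,b$, $\gcd(x^{a}+1,x^{b}+1)=x^{\gcd(a,b)}+1$ when $e(a)=e(b)$, and $\gcd(x^{a}+1,x^{b}+1)=1$ when $e(a)\ne e(b)$. For the proof, a common root $\zeta$ satisfies $\zeta^{a}=\zeta^{b}=-1$, hence $\zeta^{2a}=\zeta^{2b}=1$ and $\zeta$ is a root of $\gcd(x^{2a}-1,x^{2b}-1)=x^{2d}-1$ with $d=\gcd(a,b)$; from $\zeta^{2d}=1$ and $\zeta^{a}=-1$ one deduces $\zeta^{d}=-1$, so $(-1)^{a/d}=-1$ and $(-1)^{b/d}=-1$, i.e.\ $a/d$ and $b/d$ are both odd, and since $d=\gcd(a,b)$ this forces $v_{2}(a)=v_{2}(b)\ (=v_{2}(d))$, equivalently $e(a)=e(b)$. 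Conversely, if $e(a)=e(b)$ then $a/d,b/d$ are odd and every root of $x^{d}+1$ is a common root; as all the polynomials involved are separable, the two inclusions give the stated value. Applying this across the $r$ exponents, $\gcd(x^{n_1}+1,\dots,x^{n_r}+1)=1$ precisely when $e(n_1),\dots,e(n_r)$ are not all equal (if they are all equal to $2^{s}$ the gcd is $x^{2^{s}g}+1$ with $g=\gcd(n_1/2^{s},\dots,n_r/2^{s})$).

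Now the ``if'' direction is immediate: if $e(n_i)\ne e(n_j)$ for some distinct $i,j$ and $f$ were reducible, the display above together with the lemma would give $f_c=1$, forcing $f=f_n$ to be irreducible, a contradiction. For the converse I argue contrapositively. Assume $e(n_1)=\dots=e(n_r)=2^{s}$, write $n_i=2^{s}m_i$ with $m_i$ odd, set $g=\gcd(m_1,\dots,m_r)$ (odd) and $d=2^{s}g$. Then $d\mid n_i$ and $n_i/d=m_i/g$ is odd for each $i$, so any root $\zeta$ of $x^{d}+1$ satisfies $\zeta^{n_i}=(\zeta^{d})^{n_i/d}=-1$, whence $f(\zeta)=a_0-\sum_i a_{n_i}=0$; since $x^{d}+1$ is separable it divides $f$, first in $\overline{\Q}[x]$, hence in $\Q[x]$, hence---being monic---in $\Z[x]$. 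Finally $d\ge 2^{s}\ge 1$, and for $r\ge 2$ we have $d=2^{s}g\le 2^{s}m_1<2^{s}m_r=n_r=\deg f$, so $f=(x^{d}+1)h$ with $\deg h\ge 1$; for $r=1$ we get simply $f=a_0(x^{n_1}+1)$ with $a_0$ a non-unit. In either case $f$ is a product of two non-units, i.e.\ reducible.

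The only genuine work is the gcd lemma, with its $2$-adic bookkeeping, together with the trivial degree bound $d<\deg f$ and the $r=1$ edge case; I do not anticipate any conceptual obstruction, since Theorem~\ref{thm:main} has already done the structural heavy lifting by identifying the cyclotomic part of $f$ with that explicit gcd.
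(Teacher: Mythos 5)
Your proof is correct and follows essentially the same route as the paper: Theorem~\ref{thm:main} identifies the cyclotomic part of a reducible $f$ with $\gcd(x^{n_1}+1,\dots,x^{n_r}+1)$, and the $2$-adic gcd computation (the paper's Lemma~\ref{lem:cyclo}\ref{lem:cyclo:a}, which you re-derive from scratch) decides exactly when that gcd is trivial. The only cosmetic differences are that the paper first reduces to relatively prime exponents via the substitution $x\mapsto x^{d}$ while you work with the general exponents directly, and you establish the reducible direction by substituting a root of $x^{d}+1$ into $f$ rather than citing the converse part of Theorem~\ref{thm:main}; your writeup in fact supplies the details (including the $r=1$ edge case and the degree bound $d<\deg f$) that the paper's two-line proof leaves implicit.
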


Corollary~\ref{cor:pos} provides an extensive class of irreducible polynomials. The simplest among them is: If $p$ is a number and $n_1,n_2,\ldots, n_p$ are positive integers with $(n_1,n_2, \ldots, n_p)=1$, then $x^{n_1}+x^{n_2}+\cdots+x^{n_p}+p$ is irreducible. 

On the other hand, if we have two consecutive exponents then the irreducibility criterion for such polynomials become easier. 
\begin{cor}\label{cor:main}
Suppose $f(x)=a_{n_r}x^{n_r}+a_{n_{r-1}}x^{n_{r-1}}+\cdots+a_0$ be a polynomial with $n_r=n_{r-1}+1$. Let $|a_{n_1}|+\cdots+|a_{n_r}|=|a_0|$ and $|a_0|$ is a prime number. Then $f(x)$ is reducible if and only if $f(1)=0$ or $f(-1)=0$. 
\end{cor}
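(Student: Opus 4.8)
The plan is to prove the two implications separately; essentially all the work has already been done in Theorem~\ref{thm:main}. For the ``if'' direction, suppose $f(1)=0$ or $f(-1)=0$. Then $x-1$ or $x+1$ is a factor of $f(x)$, and since $n_r=n_{r-1}+1$ with $n_{r-1}\ge n_1\ge 1$ we have $\deg f=n_r\ge 2$; hence this linear factor is proper and $f$ is reducible. (The consecutive-exponent hypothesis is not needed in this direction.)

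For the ``only if'' direction, assume $f$ is reducible. By Theorem~\ref{thm:main} we may write $f=f_cf_n$ with $f_n$ irreducible and
\[
f_c=\gcd_{1\le i\le r}\bigl(x^{n_i}+\varepsilon_i\bigr),\qquad \varepsilon_i:=\sgn(a_0a_{n_i})\in\{1,-1\};
\]
moreover $f_c$ is non-constant, since otherwise $f=f_n$ would be irreducible. Let $\zeta\in\C$ be any root of $f_c$. As $f_c$ divides each $x^{n_i}+\varepsilon_i$, we get $\zeta^{n_i}=-\varepsilon_i\ne 0$ for every $i$; in particular, invoking the consecutive exponents $n_{r-1}$ and $n_r=n_{r-1}+1$,
\[
\zeta=\zeta^{\,n_r-n_{r-1}}=\frac{\zeta^{n_r}}{\zeta^{n_{r-1}}}=\frac{-\varepsilon_r}{-\varepsilon_{r-1}}=\frac{\varepsilon_r}{\varepsilon_{r-1}}\in\{1,-1\}.
\]
Thus every root of $f_c$ is $1$ or $-1$. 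Since $f$, hence $f_c$, has only simple zeros (again by Theorem~\ref{thm:main}) and $f_c$ is a product of cyclotomic polynomials, $f_c$ is a product of \emph{distinct} cyclotomic polynomials all of whose roots lie in $\{1,-1\}$; as $1$ and $-1$ are the only real roots of unity, the only such cyclotomic polynomials are $\Phi_1(x)=x-1$ and $\Phi_2(x)=x+1$, so $f_c$ divides $x^2-1$. Being non-constant, $f_c\in\{x-1,\ x+1,\ x^2-1\}$, and in each case $x-1$ or $x+1$ divides $f$, i.e.\ $f(1)=0$ or $f(-1)=0$.

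I do not anticipate a genuine obstacle: granted Theorem~\ref{thm:main}, the ``only if'' direction is a one-line computation with $\zeta$. The only points requiring a little care are (i) ruling out the degenerate possibility $f_c=1$, which is precisely where reducibility of $f$ together with irreducibility of $f_n$ is used, and (ii) checking $\deg f\ge 2$ so that a linear factor is genuinely proper. As the displayed identity makes clear, the argument uses nothing beyond the fact that two of the exponents differ by $1$, so it applies to any polynomial of the stated form possessing two consecutive exponents, not merely to the two \emph{largest} ones.
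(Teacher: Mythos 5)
Your proof is correct and follows essentially the same route as the paper: both directions rest on Theorem~\ref{thm:main}, with the ``only if'' part reducing to the observation that a common root $\zeta$ of $x^{n_r}+\varepsilon_r$ and $x^{n_{r-1}}+\varepsilon_{r-1}$ with $n_r=n_{r-1}+1$ must be $\pm 1$ (the paper phrases this as $(x^n\pm1,x^{n-1}\pm1)$ being $1$ or $x\pm1$). Your closing remark that only some pair of consecutive exponents is needed matches the paper's own comment following the corollary.
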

 The proof of this Corollary directly follows from Theorem \ref{thm:main} along with the fact that $(x^n\pm 1, x^{n-1}\pm 1)$ is either  $1$ or $x\pm 1$. The condition $n_r=n_{r-1}+1$ can be replaced by $n_j=n_{j-1}+1$ for $2\le j\le r$.

In Section~\ref{sec:app} we present an explicit irreducibility criterion for trinomials of the form $ax^n+b\epsilon_1x^m+p\epsilon_2$ with $a+b=p,$ where $a,b,p\in \Z_+,$ $p$ is  a prime number and $\epsilon_i\in \{-1,1\}$.  In the end, we  determine the separability of quadrinomials of  form $x^n+\epsilon_1x^m+\epsilon_2x^r+\epsilon_3, \epsilon_i\in \{-1, 1\}$ over $\Q$, the set of rational numbers.

\section{Proofs}\label{sec:main}
In this section, we give  proof for Theorem~\ref{thm:main} and Corollary~\ref{cor:pos}. With suitable modifications in the proof of Theorem \ref{thm:main}, one can prove Theorem~\ref{gen}. Suppose $n,m$ are positive integers. Then it is known that $$(x^n-1,x^m-1)=x^{(n,m)}-1.$$ The greatest common divisor for one or two positive signs in the above is as follows:  

\begin{lemma}\label{lem:cyclo}
Suppose $n, m$ are positive integers. Then 
\begin{enumerate}[label=(\alph*)]
\item \label{lem:cyclo:a} $$(x^n+1, x^m+1)=\begin{cases}
x^{(n,m)}+1 &\mbox{ if $e(n)=e(m)$}\\
1 &\mbox{ otherwise.}
\end{cases}$$
\item \label{lem:cyclo:b} $$(x^n+1, x^m-1)=\begin{cases}
x^{(n,m/2)}+1 &\mbox{ if $e(m)\ge 2e(n)$}\\
1 &\mbox{ otherwise.}
\end{cases}$$
\end{enumerate}
\end{lemma}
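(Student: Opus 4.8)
The plan is to reduce everything to the classical identity $(x^n-1,x^m-1)=x^{(n,m)}-1$ by manipulating the polynomials $x^n\pm 1$ into this form. The key algebraic facts I would use repeatedly are: $x^n+1=\dfrac{x^{2n}-1}{x^n-1}$, and over $\Z[x]$ (or $\Q[x]$, a PID) one has $\gcd(A,BC)$ relations together with the formula $(x^a-1,x^b-1)=x^{(a,b)}-1$. A convenient bookkeeping device is that the roots of $x^n+1$ are the $2n$-th roots of unity that are not $n$-th roots of unity, i.e. the primitive $d$-th roots of unity for those $d\mid 2n$ with $d\nmid n$; such $d$ are exactly the divisors of $2n$ with $v_2(d)=v_2(2n)=v_2(e(n))+1$. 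So $x^n+1=\prod_{d} \Phi_d(x)$ where $d$ ranges over divisors of $2n$ with $e(d)=2e(n)$, and similarly $x^m-1=\prod_{d'\mid m}\Phi_d(x)$.

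For part~\ref{lem:cyclo:a}, I would argue via roots: $\xi$ is a common root of $x^n+1$ and $x^m+1$ iff $\xi$ is a primitive $d$-th root of unity with $d\mid 2n$, $d\mid 2m$, $e(d)=2e(n)$ and $e(d)=2e(m)$. If $e(n)\ne e(m)$ no such $d$ exists, so the gcd is $1$. If $e(n)=e(m)$, then the common roots are the primitive $d$-th roots of unity for $d\mid (2n,2m)=2(n,m)$ with $e(d)=2e(n)=2e(m)=e(2(n,m))$ (using that $v_2((n,m))=\min(v_2(n),v_2(m))=v_2(n)=v_2(m)$), and those are precisely the roots of $x^{(n,m)}+1$; since all roots involved are simple, this gives the gcd. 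Alternatively, one can give a purely formal derivation: when $e(n)=e(m)$, write $n=2^a n_1$, $m=2^a m_1$ with $n_1,m_1$ odd, and use $x^n+1=\gcd\!\big(x^{2n}-1,\ (x^n-1)(x^{m}+1)\big)$-type manipulations — but the root-counting argument is cleanest and I would go with that.

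For part~\ref{lem:cyclo:b}, I would again pass to roots. A common root of $x^n+1$ and $x^m-1$ is a primitive $d$-th root of unity with $d\mid m$, $d\mid 2n$, and $e(d)=2e(n)$. Such a $d$ exists iff $2e(n)\mid e(m)$ (equivalently $e(m)\ge 2e(n)$, since these are powers of $2$); and when it does, write $m=e(m)\,m_1$ with $m_1$ odd: the admissible $d$ are those dividing $m$ with $v_2(d)=v_2(2n)$, and their union of primitive roots is exactly the set of roots of $x^{(n,m/2)}+1$. One checks $(n,m/2)$ is the right parameter by noting $d\mid 2n$ and $d\mid m$ with $v_2(d)=v_2(2n)$ forces $d\mid 2(n,m/2)$ with $v_2(d)=v_2(2(n,m/2))$, using $v_2(m/2)=v_2(m)-1\ge v_2(2n)=v_2(n)+1-1+\cdots$ — the $2$-adic valuation checks are the one place to be careful. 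When $e(m)<2e(n)$, no admissible $d$ exists and the gcd is $1$.

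I expect the main obstacle to be purely notational: keeping the $2$-adic valuation conditions straight across the two cases (in particular verifying that $(2n,2m)=2(n,m)$ "respects" the even-part condition in part~\ref{lem:cyclo:a}, and that $(n,m/2)$ rather than $(n,m)/2$ or $(2n,m)/2$ is the correct index in part~\ref{lem:cyclo:b}). Once the roots of $x^n\pm1$ are described cyclotomically, each case is a short divisor-counting argument, and simplicity of all roots involved means the gcd is automatically squarefree, so no multiplicity issues arise.
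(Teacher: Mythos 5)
Your proposal is correct and follows essentially the same route as the paper: both arguments rest on writing $x^n+1$ and $x^m\pm 1$ as products of distinct cyclotomic polynomials $\Phi_d$ (equivalently, identifying their roots as the primitive $d$-th roots of unity with the appropriate $2$-adic condition on $d$) and then matching indices; your $2$-adic valuation checks fill in exactly the details the paper omits.
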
 

\begin{proof}
Let $n=2^{\alpha}n_1$ and $m=2^{\beta}m_1$ be  prime factorizations of $n$ and $m$ respectively. Then the proof can be carried out from the identities: $x^n+1=\prod\limits_{d|n_1}\Phi_{2^{\alpha+1}d}(x)$ and $x^m-1=\prod\limits_{i=0}^{\beta}\prod\limits_{d|m_1}\Phi_{2^id}(x).$ We omit the details.
\end{proof}

\begin{proof} [\unskip\nopunct]{\textbf{Proof of Theorem~\ref{thm:main}:}}
Suppose $f(x)=g(x)h(x)$ be a non-trivial factorization of $f(x).$
Since $|a_0|=|g(0)||h(0)|$ is a prime number, without loss of generality  we can assume $|h(0)|=|a_0|.$ Suppose $g(x)=f_1(x)f_2(x)\cdots f_l(x),$ where $f_i(x)$ is an irreducible factor of $g(x).$ Now we claim that for every $i,$ $f_i(x)$ is a cyclotomic polynomial. It is sufficient to prove that $f_1(x)$ is a cyclotomic polynomial.

It is clear that $f_1(0)=\pm 1.$ Let $\deg(f_1(x))=s$ and $z_1,z_2,\ldots,z_s$ be roots of $f_1(x)$. Then 
\begin{equation}\label{eq:prod}
\prod_{i=1}^s |z_i|=\frac{1}{|d|},
\end{equation} 
where $d$ is the leading coefficient of $f_1(x)$. 
If $|z_i|<1$ for some $i$, then 
\begin{equation*}
|a_0|=|-(a_{n_1}z_i^{n_1}+a_{n_2}z_i^{n_2}+\cdots+a_{n_r}z_i^{n_r})|< |a_{n_1}|+|a_{n_2}|+\cdots +|a_{n_r}|
\end{equation*} 
contradicts the hypothesis. So all of the roots of $f_1(x)$ lies in the region $|z|\ge 1$. But from Equation~(\ref{eq:prod}) we have $\prod_i |z_i|\le 1.$ Consequently $d=\pm 1.$ From Kronecker's theorem $f_1(x)$ is a cyclotomic polynomial. Next we will prove $g(x)=\gcd(x^{n_i}+\sgn(a_0a_{n_i})),$ where $1\le i\le r.$

 Let $\zeta$ be a primitive $t^{\text{th}}$ root of unity with $f(\zeta)=0$. Then 
\begin{equation}\label{eq2}
-a_0=a_{n_1}\zeta^{n_1}+a_{n_2}\zeta^{n_2}+\cdots+a_{n_r}\zeta^{n_r}.
\end{equation} 
Taking modulus on both sides
\begin{equation*}
|a_0|=|a_{n_1}\zeta^{n_1}+a_{n_2}\zeta^{n_2}+\cdots+a_{n_r}\zeta^{n_r}|=\sum\limits_{i=1}^r|a_{n_i}|.
\end{equation*}
From triangle inequality, the last two equations hold if and only if the ratio of any two part is a positive real number. Therefore, $a_{n_r}\zeta^{n_r-n_i}/a_{n_i}=|a_{n_r}\zeta^{n_r-n_i}/a_{n_i}|$ gives $\zeta^{n_r-n_i}=\sgn(a_{n_r}a_{n_i})$ for $1\le i\le r-1$. From \eqref{eq2}, we have
\begin{equation*}
-a_0= a_{n_i}\zeta^{n_i}\left[ \left|\frac{a_{n_1}}{a_{n_i}}\right|+\dots+\left|\frac{a_{n_{i-1}}}{a_{n_i}}\right|+1+\left|\frac{a_{n_{i+1}}}{a_{n_i}}\right|+\dots+\left|\frac{a_{n_r}}{a_{n_i}}\right|\right],
\end{equation*}
so that $\zeta^{n_i}=-\sgn(a_0a_{n_i})$. From $\zeta^{n_r-n_i}\zeta^{n_i}$, one gets the last equation. Remaining all the equations satisfied by $\zeta$ can be drawn from these $r$ equations. Conversely, if $\zeta$ satisfy each of the $r$ equations $x^{n_i}+\sgn(a_0a_{n_i})=0$ then $f(\zeta)=0$. It remains to show that each cyclotomic factor has multiplicity one. If not, suppose $\zeta$ satisfies the $r$ equations $x^{n_i}+\sgn(a_0a_{n_i})=0, 1\le i\le r$ and $f(\zeta)=0, f'(\zeta)=0$. Then the last relation gives 
\begin{equation*}
n_ra_{n_r}\zeta^{n_r-1}+\cdots+n_1a_{n_1}\zeta^{n_1-1}=0.
\end{equation*}
Using those $r$ equations satisfied by $\zeta$, we will end up with the relation  $n_r|a_{n_r}|+\cdots+|a_{n_1}|n_1=0$, which is not possible. 
\end{proof}

\begin{rem}\label{rem}
Let $g(x)$ be the  product of all cyclotomic factors of $f(x),$ where $f(x)$ is  as stated in Theorem~\ref{gen}. Then similar to Theorem~\ref{thm:main}, it can be seen that $g(x)$ is the greatest common divisor of the equations 
\begin{equation*}
x^{n_i}=-\sgn(a_0a_{n_i}), \quad 1\le i\le r.
\end{equation*}
\end{rem}

\begin{proof} [\unskip\nopunct]{\textbf{Proof of Corollary~\ref{cor:pos}:}} If  $g(x)=h(x^d)\in \Z[x]$, $d\ge 1$, then $g(x)$ has a cyclotomic factor if and only if $h(x)$  has a  cyclotomic factor. Thus it is sufficient to prove  the result for polynomials whose exponents are relatively prime. The result follows from Theorem \ref{thm:main} and Lemma \ref{lem:cyclo}.
\end{proof}

\section{Applications}\label{sec:app}
We apply the above theory to trinomials of the form $f(x)=ax^n+b\epsilon_1x^m+p\epsilon_2,$ where $a,b,p$ are positive integers, $a+b=p$, $p$ is a prime and $\epsilon_i\in \{-1, 1\}$. 

A. Schinzel\cite{AS} had found out the cyclotomic factors of trinomials. For the present family of polynomials, we will give the irreducibility criterion in a more simpler form. Similar studies related to trinomials can be found on \cite{sel},\cite{tver},\cite{AB},\cite{JS}. 

From Theorem \ref{thm:main}, $f(x)$ is separable over $\Q$. However, by using the discriminant formula for trinomials, one can prove the separability of $f(x)$ over $\Q$, where $|f(0)|$ is prime, in a  wider range. For example,  it is known that 
\begin{theorem}[C.R. Greenfield, D. Drucker~\cite{GD}]\label{th0}
The discriminant of the trinomial $x^n+ax^m+b$ is 
\begin{equation*}
D=(-1)^{\binom{n}{2}}b^{m-1}\left[ n^{n/d}b^{n-m/d}-(-1)^{n/d}(n-m)^{n-m/d}m^{m/d}a^{n/d}\right]^d
\end{equation*}
where $d=(n,m)$.
\end{theorem}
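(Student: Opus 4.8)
The plan is to extract $D$ from its expression as a resultant and push the computation down to the roots of a single binomial. Recall that for a monic $f$ of degree $n$ with roots $\alpha_1,\dots,\alpha_n$ one has $D=(-1)^{\binom{n}{2}}\prod_i f'(\alpha_i)=(-1)^{\binom{n}{2}}\operatorname{Res}(f,f')$. For $f(x)=x^n+ax^m+b$ I would first factor $f'(x)=x^{m-1}g(x)$ with $g(x)=nx^{n-m}+am$, so that
\[
\prod_i f'(\alpha_i)=\Bigl(\prod_i\alpha_i\Bigr)^{m-1}\prod_i g(\alpha_i)=\bigl((-1)^n b\bigr)^{m-1}\operatorname{Res}(f,g),
\]
the first factor coming straight from the constant term of $f$. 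Everything now sits in $\operatorname{Res}(f,g)$.

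Next I would flip the resultant so as to run over the roots of $g$ rather than those of $f$. Since $\deg f=n$, $\deg g=n-m$ and $g$ has leading coefficient $n$,
\[
\operatorname{Res}(f,g)=(-1)^{n(n-m)}\operatorname{Res}(g,f)=(-1)^{n(n-m)}n^n\prod_j f(\beta_j),
\]
where $\beta_j$ runs over the $n-m$ roots of $g$, i.e.\ over the numbers with $\beta_j^{n-m}=-am/n$. Writing $f(\beta_j)=\beta_j^{m}\bigl(\beta_j^{n-m}+a\bigr)+b$ and substituting $\beta_j^{n-m}=-am/n$ collapses each factor to $f(\beta_j)=\tfrac{a(n-m)}{n}\beta_j^{m}+b$, linear in $\beta_j^{m}$.

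The crux, and the step I expect to be the real obstacle, is the bookkeeping of the multiset $\{\beta_j^{m}\}$. Fixing one root $\beta_0$ of $g$, the remaining ones are $\zeta\beta_0$ with $\zeta$ ranging over the $(n-m)$-th roots of unity, so $\beta_j^{m}=\zeta^{m}\beta_0^{m}$; since $\zeta\mapsto\zeta^{m}$ is a $d$-to-one map of $\mu_{n-m}$ onto $\mu_{(n-m)/d}$ --- this is exactly where $d=(m,n-m)=(m,n)$ enters --- the product over $j$ becomes a $d$-th power of a product over $\omega\in\mu_{(n-m)/d}$. Applying $\prod_{\omega^{k}=1}(b+c\omega)=b^{k}-(-c)^{k}$ with $k=(n-m)/d$ and $c=\tfrac{a(n-m)}{n}\beta_0^{m}$, and using $\beta_0^{m(n-m)/d}=(\beta_0^{n-m})^{m/d}=(-am/n)^{m/d}$ to rewrite $c^{k}$ as an explicit monomial in $a,m,n,n-m$, the factor $n^n=(n^{n/d})^{d}$ out front clears all denominators and gives
\[
\operatorname{Res}(g,f)=\bigl[\,n^{n/d}b^{(n-m)/d}-(-1)^{n/d}(n-m)^{(n-m)/d}m^{m/d}a^{n/d}\,\bigr]^{d}.
\]

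It remains to collect signs. The two spurious factors $(-1)^{n(m-1)}$ (from $((-1)^n b)^{m-1}$) and $(-1)^{n(n-m)}$ (from flipping the resultant) multiply to $(-1)^{n(n-1)}=1$, so that $D=(-1)^{\binom{n}{2}}b^{m-1}\bigl[\,n^{n/d}b^{(n-m)/d}-(-1)^{n/d}(n-m)^{(n-m)/d}m^{m/d}a^{n/d}\,\bigr]^{d}$, the asserted formula. A shorter but less self-contained route would be to settle the coprime case $d=1$ first --- where the computation above runs with no multiplicities at all --- and then feed $h(y)=y^{n/d}+ay^{m/d}+b$ into the formula expressing $\operatorname{disc}(h(x^{d}))$ through $\operatorname{disc}(h)$; even there, the root-of-unity multiplicity count reappears as the one genuinely delicate point.
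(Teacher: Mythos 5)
The paper does not prove this statement at all: it is quoted verbatim from Greenfield--Drucker \cite{GD} and used as a black box in the proof of Theorem~\ref{sepa}, so there is no in-paper argument to compare against. Your resultant computation is correct and complete: the factorization $f'(x)=x^{m-1}(nx^{n-m}+am)$, the flip $\operatorname{Res}(f,g)=(-1)^{n(n-m)}\operatorname{Res}(g,f)$, the collapse of $f(\beta_j)$ to a linear expression in $\beta_j^{m}$ via $\beta_j^{n-m}=-am/n$, and the $d$-to-one count for $\zeta\mapsto\zeta^{m}$ on $\mu_{n-m}$ (with kernel of order $\gcd(m,n-m)=d$) all check out, and the sign bookkeeping $(-1)^{n(m-1)}(-1)^{n(n-m)}=1$ is right. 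Your derivation also confirms that the ambiguously typeset exponent $b^{n-m/d}$ in the statement must be read as $b^{(n-m)/d}$. The only caveats worth recording are the implicit nondegeneracy assumptions $ab\neq 0$ and $0<m<n$, under which $g$ has $n-m$ distinct nonzero roots as your argument requires; this is in substance the same computation as in the cited source.
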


\begin{theorem}\label{sepa}
Let $a,b,p\in \N$, $b\le p$ and $p$ be a prime number. Then $f(x)=ax^n+b\epsilon_1x^m+p\epsilon_2,$ where $\epsilon_i\in\{-1,1\}$ is separable over $\Q$. 
\end{theorem}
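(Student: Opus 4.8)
The plan is to argue by contradiction: assuming $f$ is inseparable I would exhibit a repeated root, observe that it must in fact be \emph{rational}, and then eliminate it by the rational root theorem together with a $p$-adic valuation count. First some normalization. Put $d=\gcd(n,m)$ and write $n=dn'$, $m=dm'$, so that $f(x)=g(x^{d})$ with $g(y)=ay^{n'}+b\epsilon_1y^{m'}+p\epsilon_2$; since $g(0)=p\epsilon_2\neq 0$, the identity $f'(x)=dx^{d-1}g'(x^{d})$ shows $f$ is separable over $\Q$ if and only if $g$ is, so I may assume from the outset that $\gcd(n,m)=1$. Note also that $0$ is never a root of $f$, its constant term being $\pm p$. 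Now suppose $f$ has a repeated root $\alpha$, i.e.\ $f(\alpha)=f'(\alpha)=0$; equivalently, by Theorem~\ref{th0} applied to the monic associate of $f$, the discriminant of $f$ vanishes, which after clearing denominators and taking absolute values is the Diophantine equation $n^{n}p^{\,n-m}a^{m}=(n-m)^{\,n-m}m^{m}b^{\,n}$. The task is to show this is impossible when $1\le b\le p$ with $p$ prime.

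The decisive point is that $\alpha$ is rational. Indeed $f'(x)=x^{m-1}\bigl(nax^{\,n-m}+mb\epsilon_1\bigr)$ and $\alpha\neq 0$, so $\alpha^{\,n-m}=-mb\epsilon_1/(na)\in\Q$; substituting this into $f(\alpha)=0$ gives $\alpha^{m}=-pn\epsilon_1\epsilon_2/\bigl(b(n-m)\bigr)\in\Q$. Since $\gcd(m,n-m)=\gcd(m,n)=1$, choosing integers $u,v$ with $um+v(n-m)=1$ gives $\alpha=(\alpha^{m})^{u}(\alpha^{\,n-m})^{v}\in\Q$. Writing $\alpha=r/s$ in lowest terms, the rational root theorem yields $r\mid p$ and $s\mid a$; as $p$ is prime, $|r|\in\{1,p\}$ and $1\le s\le a$. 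Substituting $\alpha=r/s$ into the two relations above and taking absolute values gives the integer identities
$$na\,|r|^{\,n-m}=mb\,s^{\,n-m},\qquad b(n-m)\,|r|^{m}=pn\,s^{m}.$$

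It remains to rule out both values of $|r|$. If $|r|=1$, the second identity reads $b(n-m)=pns^{m}\ge pn$, while $b(n-m)\le p(n-m)<pn$ — contradiction. If $|r|=p$, then $\gcd(r,s)=1$ forces $p\nmid s$ and the identities become $na\,p^{\,n-m}=mb\,s^{\,n-m}$ and $b(n-m)p^{\,m-1}=ns^{m}$. Now I would compare $p$-adic valuations. Because $\gcd(n,m)=1$, the prime $p$ divides at most one of $n$, $m$, $n-m$, and $b\le p$ gives $v_p(b)\le 1$. A short valuation count on the second identity forces $p\mid n$ with $v_p(n)=v_p(b)+m-1$ (the remaining alternatives are eliminated at once, the only slightly delicate one being $m=1$ with $p\nmid n$, which is closed directly by the first identity). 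Feeding this into the first identity, its left side has $p$-valuation $v_p(n)+v_p(a)+(n-m)\ge v_p(b)+n-1$, whereas its right side has $p$-valuation $v_p(b)$ (since $p\nmid m$ and $p\nmid s$); as $n\ge 2$ this is absurd. Hence the Diophantine equation fails, so $f$ is separable over $\Q$.

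The step I expect to be the main obstacle is this valuation bookkeeping in the case $|r|=p$: one must organize the subcases according to whether $b=p$ or $b<p$ and according to which of $n$, $m$, $n-m$ carries the factor $p$, and verify that the degenerate trinomials (notably $m=1$, or $n=m+1$) are genuinely covered rather than silently left out. A secondary point deserving a line of care is the opening reduction to $\gcd(n,m)=1$: one should check that passing from $f$ to $g$ creates no new inseparability — in particular no repeated root at $0$ — which is automatic here because $g(0)=p\epsilon_2\neq 0$.
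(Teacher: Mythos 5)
Your argument is correct, and it reaches the paper's conclusion by a genuinely different route. The paper makes the same two opening moves --- reduce to $(n,m)=1$ and translate the existence of a multiple root into a Diophantine condition --- but it does so by invoking the Greenfield--Drucker discriminant formula (Theorem~\ref{th0}) and then kills the resulting equation $n^np^{n-m}a^m=\pm(n-m)^{n-m}m^mb^n$ by bare divisibility: since $\gcd(n,m(n-m))=1$ one gets $n^n\mid b^n$, hence $n\mid b\le p$, while the factor $p^{n-m}$ forces $p$ to divide $m$ or $n-m$, each of which is positive and smaller than $n\le b\le p$; the case $n=2$ is treated separately. Your proof never needs the discriminant formula: the observation that $\alpha^{n-m}$ and $\alpha^{m}$ are individually rational, so that $\alpha\in\Q$ by B\'ezout and the rational root theorem applies, replaces it, and the endgame is a $p$-adic valuation count rather than the paper's $n\mid b$ trick. (The two proofs are refuting the same condition: raising your two identities to the powers $m$ and $n-m$ respectively and eliminating $|r|^{m(n-m)}$ recovers the discriminant equation.) What your route buys is self-containedness and a more explicit case analysis --- in particular you handle the possibility $p\mid b$, i.e.\ $b=p$, which the paper's step ``$p\mid m$ or $p\mid n-m$'' passes over in silence --- at the cost of the longer $|r|=p$ bookkeeping, which does close in each of the subcases you list: $p\mid n-m$ is immediate from $v_p(b)+v_p(n-m)+m-1=0$, the subcase $m=1$ with $p\nmid n$ dies on the first identity since its left side has positive $p$-valuation and its right side has valuation $v_p(b)=0$, and $p\mid n$ leads to $v_p(a)+n-1=0$, impossible for $n\ge 2$. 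What the paper's route buys is brevity once Theorem~\ref{th0} is granted.
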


\begin{proof}
 From Theorem \ref{th0}, the discriminant of $f$ is 
\begin{equation*}
D_f=(-1)^{\binom{n}{2}}(p\epsilon_2)^{n-m}a^{n-m-1}\left[ n^{n/d}(p\epsilon_2)^{n-m/d}a^{m/d}-(-1)^{n/d}(n-m)^{n-m/d}m^{m/d}(b\epsilon_1)^{n/d}\right]^d,
\end{equation*}
where $d=(n,m)$. Since the polynomial $x^v-\alpha, 0\ne\alpha\in \C,$ is separable over $\Q$, it is sufficient to consider $d=(n,m)=1$. $f(x)$ has multiple root if and only if $D_f=0$, i.e.,
\begin{equation*}
n^n(p\epsilon_2)^{n-m}a^m=(-1)^n(n-m)^{n-m}m^m(b\epsilon_1)^n.
\end{equation*}
Since $d=1$ and $n\ge 3$,  it is not possible that $b=1$. If $b>1$, $n|b$ and $p|m$ or $p|n-m$ give $m,n-m<n\le b\le p$, a contradiction. If $n=2$, then $f(x)=ax^2+b\epsilon_1x+p\epsilon_2$ has multiple roots if and only if $p|b$ and $p=4\epsilon_2a$, contradicting the fact that $p$ is a prime number.
\end{proof}
To determine the cyclotomic factors of trinomials, it can readily be observed that if $\epsilon_1=1, \epsilon_2=-1$ then $f(1)=0$. In fact $f(x)/x^{(n,m)}-1$ is an irreducible non-reciprocal  polynomial by Theorem~\ref{thm:main} and \ref{sepa}. We will consider the remaining cases below.
\begin{theorem} Let $f(x)=ax^n+b\epsilon_1x^m+p\epsilon_2$, where $a,b,p\in \N$, $p$ be a prime number, $\epsilon_i\in \{-1,1\}$ and $a+b=p$. Then $f(x)$ is irreducible except in the following cases:
\begin{multicols}{2}
\begin{enumerate}[label=(\alph*)]
\item\label{a}  $\epsilon_1=-1, \epsilon_2=1; e(n)<e(m)$, 
\item\label{b} $\epsilon_1=\epsilon_2=-1; e(n)>e(m)$,
\item\label{c}$\epsilon_1=\epsilon_2=1; e(n)=e(m)$, 
\end{enumerate} 
\end{multicols}
and in each of the above cases $f_c(x)=x^{(n,m)}+1$. 
\end{theorem}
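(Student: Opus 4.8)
The plan is to deduce everything from Theorem~\ref{thm:main} and Lemma~\ref{lem:cyclo}. Write $f(x)=a_{n_2}x^{n_2}+a_{n_1}x^{n_1}+a_0$ with $n_2=n>m=n_1\ge 1$, $a_{n_2}=a$, $a_{n_1}=b\epsilon_1$, $a_0=p\epsilon_2$; then $|a_{n_1}|+|a_{n_2}|=a+b=p=|a_0|$ with $p$ prime, so the hypotheses of Theorem~\ref{thm:main} hold. Since $a,b,p>0$ we have $\sgn(a_0a_{n_2})=\epsilon_2$ and $\sgn(a_0a_{n_1})=\epsilon_1\epsilon_2$, so Theorem~\ref{thm:main} says that if $f$ is reducible then
\[
f_c(x)=\gcd\!\left(x^{n}+\epsilon_2,\ x^{m}+\epsilon_1\epsilon_2\right),
\]
and this $f_c$ is nontrivial (as $f=f_cf_n$ with $f_n$ a proper factor; note also that $f$ is never itself a product of cyclotomic polynomials, its constant term $\pm p$ having absolute value $>1$). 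Conversely, if this gcd is nontrivial it has a root $\zeta$, necessarily a root of unity, and---just as in the converse half of the proof of Theorem~\ref{thm:main}---one computes $f(\zeta)=a\zeta^{n}+b\epsilon_1\zeta^{m}+p\epsilon_2=-a\epsilon_2-b\epsilon_1^2\epsilon_2+p\epsilon_2=(p-a-b)\epsilon_2=0$; the minimal polynomial $\Phi$ of $\zeta$ is then a monic cyclotomic polynomial dividing $f$ in $\Z[x]$ with $\deg\Phi\le m<n$, so $f$ is reducible. Hence $f$ is reducible if and only if the displayed gcd is nontrivial, and then it equals $f_c(x)$.

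Next I would run through the four sign patterns, computing the gcd with Lemma~\ref{lem:cyclo}. When $(\epsilon_1,\epsilon_2)=(1,-1)$ the gcd is $\gcd(x^{n}-1,x^{m}-1)=x^{(n,m)}-1\ne 1$, so $f$ is always reducible; this is the case already isolated in the paragraph preceding the theorem, where $f(1)=0$ and $f_c(x)=x^{(n,m)}-1$. For the three patterns appearing in the statement: if $(\epsilon_1,\epsilon_2)=(1,1)$ then $\gcd(x^{n}+1,x^{m}+1)$ is nontrivial exactly when $e(n)=e(m)$ by Lemma~\ref{lem:cyclo}\ref{lem:cyclo:a}, which is case~\ref{c}; if $(\epsilon_1,\epsilon_2)=(-1,1)$ then $\gcd(x^{n}+1,x^{m}-1)$ is nontrivial exactly when $e(m)\ge 2e(n)$ by Lemma~\ref{lem:cyclo}\ref{lem:cyclo:b}, and $e(m)\ge 2e(n)$ is the same as $e(n)<e(m)$ since both are powers of $2$, which is case~\ref{a}; if $(\epsilon_1,\epsilon_2)=(-1,-1)$ then $\gcd(x^{m}+1,x^{n}-1)$ is nontrivial exactly when $e(n)\ge 2e(m)$, i.e.\ $e(n)>e(m)$, which is case~\ref{b}. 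This yields the claimed irreducibility dichotomy.

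Finally I would pin down $f_c(x)=x^{(n,m)}+1$ in each of the three cases. For $(1,1)$ this is exactly what Lemma~\ref{lem:cyclo}\ref{lem:cyclo:a} gives. For $(-1,1)$, Lemma~\ref{lem:cyclo}\ref{lem:cyclo:b} gives $f_c(x)=x^{(n,m/2)}+1$; writing $n=2^{\alpha}n_0$ and $m=2^{\beta}m_0$ with $n_0,m_0$ odd, the case hypothesis $e(n)<e(m)$ means $\alpha<\beta$, whence $(n,m/2)=2^{\alpha}(n_0,m_0)=(n,m)$. The case $(-1,-1)$ is symmetric: Lemma~\ref{lem:cyclo}\ref{lem:cyclo:b} gives $x^{(m,n/2)}+1$, and $e(n)>e(m)$ gives $(m,n/2)=(n,m)$. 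None of this is deep; the only place requiring care---the main obstacle such as it is---is the bookkeeping: getting the signs right in reducing $f_c$ to $\gcd(x^{n}+\epsilon_2,x^{m}+\epsilon_1\epsilon_2)$, correctly matching the three inequalities between $e(n)$ and $e(m)$ to the two cases of Lemma~\ref{lem:cyclo}, and confirming the last gcd identity via $2$-adic valuations.
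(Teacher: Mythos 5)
Your proof is correct and follows essentially the same route as the paper: reduce to $f_c=\gcd(x^{n}+\sgn(a_0a_n),\,x^{m}+\sgn(a_0a_m))$ via Theorem~\ref{thm:main}, evaluate the gcd with Lemma~\ref{lem:cyclo}, and confirm reducibility in the listed cases by checking $f(\zeta)=0$ at a root of $x^{(n,m)}+1$. The only (harmless, arguably cleaner) divergence is in the ``only if'' direction: you deduce irreducibility from the triviality of the gcd directly via Theorem~\ref{thm:main}, whereas the paper writes $f(x)=f_1(x^{2^{\beta}})$ and invokes Corollary~\ref{cor:pos}, and you also treat all four sign patterns uniformly while the paper details only case~\ref{a}.
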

\begin{proof}
Suppose $f(x)=ax^n-bx^m+p$ be reducible. From Theorem \ref{thm:main} the cyclotomic factors of $f(x)$ divide the equations $x^n+1=0$ and $ x^m-1=0$. From Lemma \ref{lem:cyclo}, we have, 
\begin{equation*}
f_c(x)|(x^n+1,x^m-1)=\begin{cases} 
x^{(n,m/2)}+1 & \mbox{ if $e(m)\ge 2e(n)$}\\
1 & \mbox{ otherwise}
\end{cases}
\end{equation*}
Let $n=2^{\alpha}n_1, m=2^{\beta}m_1$ be the prime factorization of $n$ and $m$ respectively, and $d=(n,m/2)=2^{\min(\alpha,\beta)}(n_1,m_1)$. If $\alpha\ge \beta,$ then $f(x)=f_1(x^{2^{\beta}})$ with $f_1(x)$ irreducible by Corollary \ref{cor:pos}. Therefore, $f(x)$ is reducible if and only if $\alpha<\beta$, that is, $e(n)<e(m)$. Also, if $e(n)<e(m)$ then $(n,m/2)=(n,m)$.  Conversely, let $\zeta$ be a root of $x^d+1$ where $d=(n,m)$. Then
\begin{equation*}
f(\zeta)=a\zeta^n-b\zeta^m+p=-a-b+p=0.
\end{equation*} 
From Theorem \ref{thm:main} or Theorem \ref{sepa}, we get $f_c(x)=x^{(n,m)}+1$. In a similar fashion, part \ref{b}, \ref{c} can be proved. We omit the details.
\end{proof}
 
W. Ljunggren\cite{lju} has proved that the quadrinomials of the form $g(x)=x^n+\epsilon_1x^m+\epsilon_2x^r+\epsilon_3$ can have at most one irreducible non-reciprocal factor apart from its cyclotomic factors. However, Mills \cite{mills} proved that Ljunggren's result is true except for four families in which corresponding quadrinomials factor as a  product of two non-reciprocal irreducible polynomials apart from its cyclotomic factors. For example,
$$x^{8k}-x^{7k}-x^k-1=(x^{2k}+1)(x^{3k}-x^k-1)(x^{3k}-x^{2k}-1), \text{ for all } k\in \N.$$
Therefore, in either case, determination of  the separability of quadrinomials is same as to check separability of its cyclotomic parts. There are quadrinomials which are not separable. For example, 
$x^4+x^3+x+1=(x+1)^2(x^2-x+1).$

\begin{theorem}
Suppose $f(x)=x^n+\epsilon_1x^m+\epsilon_2x^r+\epsilon_3$ be a polynomial  of degree $n$, $\epsilon_i\in \{\, 1,-1\,\}$. Then $f(x)$ is separable if  $f(\pm 1)\ne 0$. 
\end{theorem}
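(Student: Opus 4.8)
The plan is to reduce, via the structure theory of Ljunggren~\cite{lju} and Mills~\cite{mills} recalled above, to showing that no root of unity is a repeated root of $f$, and then to settle that by an elementary triangle--inequality computation. By that theory every irreducible factor of $f$ is either a cyclotomic polynomial or one of at most two \emph{distinct} non-reciprocal irreducible polynomials; the latter are coprime to the cyclotomic part $f_c$ of $f$ (a non-reciprocal irreducible cannot coincide with a self-reciprocal $\Phi_d$), and distinct cyclotomic polynomials are coprime, so $f$ is separable if and only if $f_c$ is squarefree, i.e.\ no root of unity is a repeated root of $f$. Moreover, writing $d=\gcd(n,m,r)$ and $f(x)=f_1(x^d)$, each factor $x^d-c$ with $c\ne 0$ is squarefree and distinct such factors share no root, so $f$ is separable exactly when $f_1$ is; I would therefore pass to $f_1$ and assume $\gcd(n,m,r)=1$, the hypothesis $f(\pm1)\ne0$ being read on the primitive part $f_1$ (some such reduction is needed, since e.g.\ $x^8+x^6+x^2+1=(x^2+1)^2(x^4-x^2+1)$ has $f(\pm1)\ne0$).

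Now suppose, for contradiction, that a root of unity $\zeta$ is a repeated root of $f$, so $f(\zeta)=0$ and $\zeta f'(\zeta)=n\zeta^n+m\epsilon_1\zeta^m+r\epsilon_2\zeta^r=0$. Eliminating $\zeta^n$ between these relations yields $(n-m)\epsilon_1\zeta^m+(n-r)\epsilon_2\zeta^r=-n\epsilon_3$, and eliminating $\zeta^r$ yields $(n-r)\zeta^n+(m-r)\epsilon_1\zeta^m=r\epsilon_3$. Taking moduli (with $|\zeta|=1$), the triangle inequality applied to the first relation gives $n\le(n-m)+(n-r)$, i.e.\ $m+r\le n$, while the reverse triangle inequality applied to the second gives $r\ge(n-r)-(m-r)=n-m$, i.e.\ $n\le m+r$; hence $n=m+r$.

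With $n=m+r$ equality holds in that first triangle inequality, so the unit complex numbers $\epsilon_1\zeta^m$ and $\epsilon_2\zeta^r$ are equal; substituting $\epsilon_2\zeta^r=\epsilon_1\zeta^m$ back gives $\zeta^m=-\epsilon_1\epsilon_3$ and $\zeta^r=-\epsilon_2\epsilon_3$, whence $\zeta^n=\zeta^m\zeta^r=\epsilon_1\epsilon_2$. Plugging these into $f(\zeta)=0$ collapses it to $\epsilon_1\epsilon_2-\epsilon_3=0$, so $\epsilon_3=\epsilon_1\epsilon_2$ and $f(1)=1+\epsilon_1+\epsilon_2+\epsilon_1\epsilon_2=(1+\epsilon_1)(1+\epsilon_2)$; this is nonzero only when $\epsilon_1=\epsilon_2=1$, so unless $\epsilon_1=\epsilon_2=\epsilon_3=1$ we already contradict $f(1)\ne0$. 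In the last case $n=m+r$ forces $f(x)=(x^m+1)(x^r+1)$, which by Lemma~\ref{lem:cyclo}\ref{lem:cyclo:a} is non-squarefree only when $e(m)=e(r)$; since $\gcd(n,m,r)=\gcd(m,r)=1$ this means $m$ and $r$ are both odd, so $n$ is even and $f(-1)=1-1-1+1=0$, contradicting $f(-1)\ne0$. Hence $f$ has no repeated root.

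The crux --- and the only nontrivial input --- is the first reduction, namely that a repeated root of such a quadrinomial must be a root of unity; this is exactly the content of the Ljunggren--Mills theory together with the remark that separability of $f$ reduces to separability of its cyclotomic part. After that, the remaining work is the bookkeeping above; the one slightly delicate point is the case $\epsilon_1=\epsilon_2=\epsilon_3=1$, where $f$ genuinely splits as $(x^m+1)(x^r+1)$ and one must use coprimality of the exponents, through Lemma~\ref{lem:cyclo}, to force $f(-1)=0$.
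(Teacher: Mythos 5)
Your write-up is essentially correct, and in the course of it you have exposed a real defect in the statement itself: your example $x^8+x^6+x^2+1=(x^2+1)^2(x^4-x^2+1)$ is a genuine counterexample to the theorem as printed, since here $f(1)=f(-1)=4\ne 0$ yet $f$ is inseparable. The paper's own proof breaks at exactly the point you flag: in the case $n=m+r$ it declares ``it is sufficient to consider $(m,r)=1$,'' i.e.\ it replaces $f(x)$ by the primitive $f_1$ with $f(x)=f_1(x^{(m,r)})$, and then concludes $h=(f_1,f_1')$ divides $x\pm 1$, i.e.\ $f_1(\pm1)=0$; but when $(m,r)$ is even the conclusion $f_1(-1)=0$ does not pull back to $f(-1)=0$ (in your example $f_1(y)=y^4+y^3+y+1$ has $f_1(-1)=0$ while $f(-1)\ne 0$). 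So the hypothesis really must be read on the primitive part, as you do, and with that correction the statement becomes true. You should state this as an explicit amendment to the theorem rather than leaving it as a parenthetical remark.

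As for the argument itself, yours is genuinely different from the paper's and largely more self-contained. The paper works with $h=(f,f')$ and the trinomial $nf(x)-xf'(x)$: for $n<m+r$ it shows every root of that trinomial lies outside the unit circle, so a nonconstant $h$ would have $|h(0)|>1$ against $h(0)\mid f(0)=\pm1$ (an argument that needs no input from Ljunggren--Mills); for $n=m+r$ it invokes Ljunggren--Mills to make $h$ cyclotomic and then applies Theorem~\ref{gen} and Remark~\ref{rem} to the trinomial to force $h\mid x\pm1$. You instead invoke Ljunggren--Mills once at the outset to reduce separability to the cyclotomic part, and then run two triangle inequalities on $f(\zeta)=\zeta f'(\zeta)=0$: the first (eliminating $\zeta^n$) gives $m+r\le n$, the second (eliminating $\zeta^r$) gives $n\le m+r$, and the equality case pins down $\zeta^m=-\epsilon_1\epsilon_3$, $\zeta^r=-\epsilon_2\epsilon_3$, $\epsilon_3=\epsilon_1\epsilon_2$, leaving only $f(1)=0$ or the split $f=(x^m+1)(x^r+1)$ handled by Lemma~\ref{lem:cyclo}. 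This bypasses Theorem~\ref{gen} and Remark~\ref{rem} entirely, at the cost of leaning on Ljunggren--Mills even when $n\ne m+r$, where the paper's unit-circle argument is unconditional. Two small points to tidy: state the harmless normalization $m>r$ before writing $|m-r|=m-r$ in the reverse triangle inequality, and note explicitly that the Ljunggren--Mills structure theorem gives the non-reciprocal part as squarefree (one irreducible, or two distinct irreducibles in Mills's exceptional families), which is what licenses the reduction to repeated roots of unity.
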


\begin{proof}
Without loss of generality, we can take $n\le m+r$. Otherwise we replace $f(x)$ by $\epsilon_3x^nf(x^{-1})$. Let $h(x)=(f(x),f'(x))$. Then $h(x)|nf(x)-xf'(x)=(n-m)\epsilon_1 x^m+(n-r)\epsilon_2x^r+n\epsilon_3$. If $n<m+r$, then all the roots of $nf(x)-xf'(x)$ lies outside the unit circle. For if $|z|\le 1$ be a root of $nf(x)-xf'(x)$, then 
\begin{equation*}
n=|(n-m)\epsilon_1z^m+(n-r)\epsilon_2z^r|\le 2n-m-r
\end{equation*}
contradicts the fact that $n<m+r$. Since all the roots of $nf(x)-xf'(x)$ lies in the region $|z|>1$, we have $|h(0)|>1$. This contradicts the fact that $h(0)|f(0)$.

Let $n=m+r$. From \cite{lju} and \cite{mills}, $h(x)$ is a cyclotomic polynomial. Therefore, it is sufficient to consider $(m,r)=1$. Then $nf(x)-xf'(x)$ satisfies Theorem~\ref{gen} and  by Remark~\ref{rem}, if $h(x)\ne 1$ then $h(x)$ divides either $x+1$ or $x-1$. 
\end{proof}

\thebibliography{99}
\scriptsize
\bibitem{AB}
A. Bremner, \emph{On trinomials of type $x^n+Ax^m+1$}, Math. Scand., 49, No. 2,(1981), 145-155.

\bibitem{capelli}
A. Capelli, {\em Sulla riduttibilita delle equazioni algebriche}, Nota prima, Red. Accad. Fis. Mat. Soc. Napoli(3), 3(1897), 243-252.

\bibitem{GD}
C.R. Greenfield, D. Drucker, \emph{On the Discriminant of a Trinomial}, Linear Algebra its Appl., Vol. 62(1984), 105-112.

\bibitem{lju}
W.Ljunggren, {\em On the irreducibility of certain trinomials and 
quadrinomials}, Math.Scand. 8 (1960), 65-70.
\bibitem{JS}
J. Mikusinski, A. Schinzel, \emph{Sur la r\'eductibilit\'e de certains trin\^omes}, Acta Arith., 9(1964), 91-95. 
\bibitem{mills} W.H. Mills, {\em The factorization of certain quadrinomials},
Math. Scand. 57 (1985), 44-50.

\bibitem{LP}
L. Panitopol, D. Stef\"anescu, \emph{Some criteria for irreducibility of polynomials}, Bull. Math. Soc. Sci. Math. R. S. Roumanie (N. S.), 29 (1985),  69-74.

 \bibitem{AS}
A. Schinzel, \emph{On the reducibility of polynomials and in particular of trinomials}, Acta Arith., 11(1965), 1-34.

\bibitem{sel}
Ernst S.Selmer, {\em On the irreducibility on certain trinomials}, Math.Scand. 
4 (1956), 287-302.
\bibitem{tver} 
H. Tverberg, {\em On the irreducibility of the trinomials $x^n\pm x^m\pm 1$},
Math.Scand., 8 (1960), 121-126.

\end{document}